\newcommand{\ys}{y^{\sigma}}
\newcommand{\zs}{z^{\sigma}}
\newcommand{\abs}[1]{\left\vert#1\right\vert}
\newcommand{\N}{\mathbb{N}}
\newcommand{\set}[1]{\left\{#1\right\}}
\newcommand{\norm}[2]{\| #1 \|_{#2}}
\newcommand{\scalar}[2]{\langle{ #1},{#2} \rangle}
\newcommand{\lr}[1]{\left( #1\right)}
\newcommand{\lb}[1]{\left[ #1\right]}
\newcommand{\sset}{A_\varphi}
\newcommand{\abold}{\mathbf{a}}
\newcommand{\E}[1]{\mathbb{E}[#1]}
\theoremstyle{theorem}
\newtheorem*{theorem}{Theorem}
\newtheorem*{corollary}{Corollary}
\newtheorem{lemma}{Lemma}
\theoremstyle{definition}
\newtheorem{assumption}{Definition}
\newtheorem{remark}{Remark}
\title[statistical inverse problems under general
  source conditions]{Minimax rates for statistical inverse problems under general
  source conditions} 
\author{Litao Ding}
\email{13110180018@fudan.edu.cn (LiTao Ding)}
\address{School of Mathematical Sciences, Fudan University, Shanghai,
  China 200433}
\author{Peter Math{\'e}}
\email{peter.mathe@wias-berlin.de (Peter Math\'e)}
\address{Weierstrass Institute, Mohrenstrasse 39, D-10117 Berlin, Germany}
\begin{document}



\begin{abstract}
 We describe the minimax reconstruction rates in linear ill-posed
 equations in Hilbert space when smoothness is given in terms of
 general source sets. The underlying fundamental result, the minimax
 rate on ellipsoids, is proved similarly to the seminal study by
 D. L. Donoho, R.~C. Liu, and B. MacGibbon, {\it Minimax risk over
   hyperrectangles, and implications}, Ann.~ Statist. 18, 1990. These
 authors highlighted the special role of the truncated series
 estimator, and for such estimators the risk can explicitly be given.
We provide several examples, indicating results for
statistical estimation in ill-posed problems in Hilbert space.
\end{abstract}

\subjclass[2010]{ 65J22; secondary 62G20}
\keywords{
statistical inverse problem, general source condition, minimax rate
}
\maketitle
\section{Introduction}
\label{sec:intro}

We consider linear operator equations of the form
\begin{equation}
  \label{eq:base}
\ys=Tx+\sigma\xi,
\end{equation}
where $T$ is a compact linear operator between Hilbert spaces $X$ and
$Y$ under Gaussian white noise~$\xi$ and with noise level~$\sigma>0$.
The compact operator T has a singular value decomposition, where
$\{s_n^2\}$ denotes the sequence of eigenvalues of $T^*T$, arranged in
decreasing order, and $\{v_n\}$ in $X$,
and $\{u_n\}$  in $Y$ are orthonormal systems. In particular, we have
\begin{equation}
  \label{eq:svd}
T x=\Sigma_{j=1}^{\infty}s_j\scalar{x}{v_j}u_{j},\quad \quad  x\in X.
\end{equation}
By using the singular value decomposition, and letting~$\sigma_{k}:=
s^{-1}_{k},\ k=1,2,\dots$, as well as the
coefficients~$\theta_k=(x,v_k),\ k=1,2,\dots$ we find that the
model~(\ref{eq:base}) is equivalent to the sequence space model
\begin{equation}
  \label{eq:seq-model}
 \zs_{k} =\theta_k+\sigma\sigma_k\xi_k ,\quad\quad \xi_k\sim N(0,1),
\end{equation}
provided that the solution element~$x$ is in the orthogonal complement
of the kernel of the operator~$T$.
Notice that as a consequence of the compactness of the operator~$T$ we
have that~$s_{k}\to 0$ as $k\to \infty$, and hence~$\sigma_{k}\to
\infty$, such that higher number coefficients are blurred by higher
noise level. This is a typical feature of inverse problems, and it
thus requires to regularize the observations~$\zs_{k},\ k=1,2,\dots$

In order to apply the minimax paradigm for the analysis of statistical
inverse problems we introduce classes of
elements~$\theta = \lr{\theta_{k}}_{k=1}^{\infty}$.
Prototypical classes are given through \emph{Sobolev-type} ellipsoids.
\begin{assumption}
  [Sobolev-type ellipsoid]
For a given increasing sequence~$\abold = \lr{a_{j}}_{j=1}^{\infty},\ a_{j} > 0$, and a constant~$Q< \infty$
we let
\begin{equation}
  \label{eq:sobolev}
  \Theta_{\abold}(Q) = \set{\theta,\quad  \sum_{j=1}^{\infty} a_{j}^{2}
    \theta_{j}^{2}\leq Q^{2}}.
\end{equation}
\end{assumption}
The increasing sequence~$a_{j}>0$ controls the decay of the
coefficients of~$\theta$. Within the theory of inverse problems it is
common to define the ellipsoid \emph{relative to the operator~$T$} in
the following form.
\begin{assumption}[General source set]
 {
 For a continuous non-decreasing function $\varphi$ with $\varphi(0)=0$ we let
 \begin{equation}
   \label{eq:source-set}
\sset=\{x\in X,\quad  x=\varphi(T^*T)v,||v||_2\le 1\}.
 \end{equation}
}
\end{assumption}
These ellipsoids are related as follows. Since for~$x\in\sset$ we have
that~$\theta_{j}= \scalar{x}{v_{j}}=
\varphi(s_{j}^{2})\scalar{v}{v_{j}},\ j\in\N$, we find that
$$
\norm{v}{2}^{2}:= \sum_{j=1}^{\infty}\frac{\theta_{j}^{2}}{\varphi(s_{j}^{2})},
$$
such that~$x\in\sset$ implies for~$a_{j}:=
\frac{Q}{\varphi(s_{j}^{2})},\ j\in\N$ that~$\theta\in
\Theta_{\abold}(Q)$. More specific examples will be given in
Section~\ref{sec:examples}.

The goal of the present study is to establish minimax rates for
statistical estimation over such sets of elements. Let~$\hat \theta: =
\hat\theta(\zs)$ be any estimator based on the data~$\zs$. Its RMS-error
on the class~$\Theta_{\abold}$ is then given as
\begin{equation}
  \label{eq:stat-error}
  e(\hat\theta, \Theta_{\abold},\sigma) :=\sup_{\theta\in \Theta_{\abold}} \lr{\E{\norm{\theta - \hat \theta}{X}^{2}}}^{1/2}.
\end{equation}
Above, we highlight that the error depends on the underlying noise level~$\sigma$.
The minimax error on a class~$\Theta_{\abold}$ is consequently given
as
\begin{equation}
  \label{eq:minimax-error}
  e(\Theta_{\abold},\sigma):= \inf_{\hat{\theta}}e(\hat\theta, \Theta_{\abold},\sigma).
\end{equation}
We will denote by
~$e_{T}(\Theta_{\abold},\sigma)$ the minimax
errors when restricted to 
truncation estimators, only. 
It will be handy to introduce the following quantity
\begin{equation}
  \label{eq:rhoD}
  \rho_{n}^{2}:= \sum_{j=1}^{n} \frac{1}{s_{j}^{2}},\quad n\in\N.
\end{equation}

The main result presented in this note is the following
\begin{theorem}
  Let~$\Theta_{\abold}$ be as in~(\ref{eq:sobolev}), and let~$s_{j},\
  j=1,2,\dots$ denote the singular numbers of the operator~$T$. Then
  for~$\sigma>0$ we have that
  \begin{equation}
    \label{eq:minimax-abold}
    e(\Theta_{\abold},\sigma) \leq \inf_{D}\lr{
      \frac{Q^{2}}{a_{D+1}^{2}} + \sigma^{2} \rho_{D}^{2}}^{1/2} \leq 2.2  e(\Theta_{\abold},\sigma)
  \end{equation}
\end{theorem}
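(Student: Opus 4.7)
The plan is to establish the two inequalities in~(\ref{eq:minimax-abold}) separately, following the approach of Donoho, Liu, and MacGibbon. The upper bound is a direct risk computation for the truncated series estimator; the lower bound is obtained by inscribing a suitable hyperrectangle into $\Theta_{\abold}(Q)$ and using the product structure of the noise in~(\ref{eq:seq-model}).

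\textbf{Upper bound.} For each $D\in\N$, consider the truncated estimator $\hat\theta_k=\zs_k$ for $k\leq D$ and $\hat\theta_k=0$ for $k>D$. Monotonicity of $\abold$ together with $\theta\in\Theta_{\abold}(Q)$ gives $\sum_{k>D}\theta_k^{2}\leq a_{D+1}^{-2}\sum_{k>D}a_k^{2}\theta_k^{2}\leq Q^{2}/a_{D+1}^{2}$, which bounds the squared bias, while the variance equals $\sigma^{2}\sum_{k\leq D}\sigma_k^{2}=\sigma^{2}\rho_{D}^{2}$. Taking the infimum over $D$ yields the left inequality in~(\ref{eq:minimax-abold}) and at the same time shows that $e_{T}(\Theta_{\abold},\sigma)$ obeys the same bound.

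\textbf{Lower bound.} The aim is to exhibit, for each $D$, a hyperrectangle $H_\tau=\{\theta:\abs{\theta_{k}}\leq\tau_{k}\}\subset\Theta_{\abold}(Q)$ whose minimax risk captures both summands in $Q^{2}/a_{D+1}^{2}+\sigma^{2}\rho_{D}^{2}$ up to a universal factor. Independence of the components $\xi_{k}$ in~(\ref{eq:seq-model}) makes the minimax risk on any such hyperrectangle additive, $\sum_{k}R_{1}(\tau_{k},\sigma\sigma_{k})$, where $R_{1}(\tau,\nu)$ denotes the minimax MSE for estimating a bounded normal mean $\abs{\theta}\leq\tau$ from $z=\theta+\nu\xi$ with $\xi\sim N(0,1)$. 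The classical bound
\[
  R_{1}(\tau,\nu)\;\geq\; c_{0}\,\min(\tau^{2},\nu^{2})
\]
with an explicit universal constant $c_{0}>0$ then reduces the task to a feasibility problem: one selects $\tau_{k}\asymp\sigma\sigma_{k}$ for $k\leq D$ and $\tau_{D+1}\asymp Q/a_{D+1}$ (and zero beyond), and rescales by a constant factor so that $\sum_{k}a_{k}^{2}\tau_{k}^{2}\leq Q^{2}$.

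\textbf{Main obstacle.} The delicate step is to carry out this rescaling with a single universal constant as $D$ varies, so that the lower bound $c_{0}\sum_{k}\min(\tau_{k}^{2},\sigma^{2}\sigma_{k}^{2})$ indeed matches $Q^{2}/a_{D+1}^{2}+\sigma^{2}\rho_{D}^{2}$ up to a bounded factor rather than just one summand. Feasibility ultimately rests on the observation that at the infimising $D^{\ast}$ in~(\ref{eq:minimax-abold}) the bias and variance summands are of the same order, so a hyperrectangle simultaneously capturing both contributions exists and lies inside $\Theta_{\abold}(Q)$ after bounded rescaling. The precise bookkeeping of $c_{0}$ together with this rescaling factor produces the numerical constant $2.2$ in~(\ref{eq:minimax-abold}).
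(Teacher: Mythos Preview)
Your upper bound is correct and coincides with the paper's argument (Lemma~\ref{lem:qt-estimator}). The lower-bound strategy, however, contains a real gap.

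The assertion that ``at the infimising $D^{\ast}$ the bias and variance summands are of the same order'' is not justified and is in general false: the theorem assumes only monotonicity of $(a_j)$ and of $(s_j)$, so $a_j$ may jump abruptly and the discrete minimiser need not sit near a balance point. More importantly, the feasibility step does not go through uniformly. Fitting the hyperrectangle with $\tau_k\asymp\sigma\sigma_k$ for $k\leq D$ into $\Theta_{\abold}(Q)$ forces $\sigma^{2}\sum_{k\leq D}a_k^{2}\sigma_k^{2}\lesssim Q^{2}$; since $a_k^{2}\sigma_k^{2}/\sigma_k^{2}=a_k^{2}$ is unbounded, the required rescaling factor is not universally bounded, and after rescaling the sum $\sum_{k\leq D}\min(\tau_k^{2},\sigma^{2}\sigma_k^{2})$ no longer controls $\sigma^{2}\rho_D^{2}$. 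No universal constant, let alone the value $2.2$, falls out of this construction.

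The paper avoids both difficulties by \emph{not} tying the hyperrectangle to a preselected $D$. It maximises the concave functional $J(r)=\sum_i\min(r_i,\sigma^{2}\sigma_i^{2})$, which equals $e_T^{2}(\Theta(r),\sigma)$, over all hyperrectangles inscribed in $\Theta_{\abold}$; a maximiser $r^{\ast}$ exists by compactness. The first-order optimality condition at $r^{\ast}$ then directly bounds, for every $\theta\in\Theta_{\abold}$, the squared risk of the truncation estimator that keeps exactly the coordinates $P=\{i:r_i^{\ast}\geq\sigma^{2}\sigma_i^{2}\}$ by $2J(r^{\ast})$. This yields $e_T(\Theta_{\abold},\sigma)\leq\sqrt{2}\,\sup_{\Theta(r)\subset\Theta_{\abold}}e_T(\Theta(r),\sigma)$, and combining with the Donoho--Liu--MacGibbon hyperrectangle bound $e_T(\Theta(r),\sigma)\leq\sqrt{2.22}\,e(\Theta(r),\sigma)\leq\sqrt{2.22}\,e(\Theta_{\abold},\sigma)$ gives the constant $\sqrt{2}\cdot\sqrt{2.22}\leq 2.2$. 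The essential missing idea in your proposal is that the ``right'' hyperrectangle and the truncation set $P$ are produced by this variational argument, not by an ad hoc bias--variance balance at a chosen $D$.
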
{}

We formulate the following consequence for source sets.
\begin{corollary}
  Consider the source set~(\ref{eq:source-set}) from above. Then
  \begin{equation}
    \label{eq:minimax-sset}
e^{2}(\sset,\sigma) \leq \inf_{D} \set{\varphi^{2}(s_{D+1}^{2}) + \sigma^{2}
  \rho_{D}^{2}}\leq 4.84  e^{2}(\sset,\sigma).
  \end{equation}
\end{corollary}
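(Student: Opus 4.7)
The plan is to recognize the source set $\sset$ as a particular instance of the Sobolev-type ellipsoid and then read the bound off from the main theorem. The derivation preceding the theorem in the excerpt already carries out most of this identification: for $x=\varphi(T^*T)v$ with $\norm{v}{2}\le 1$, the coefficients $\theta_j=\scalar{x}{v_j}=\varphi(s_j^2)\scalar{v}{v_j}$ satisfy $\sum_j \theta_j^2/\varphi^2(s_j^2)=\norm{v}{2}^2\le 1$. First I would complement this with the easy converse: any sequence $\theta$ with $\sum_j \theta_j^2/\varphi^2(s_j^2)\le 1$ arises from an admissible $v$, by setting $v:=\sum_j (\theta_j/\varphi(s_j^2))v_j$. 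Consequently, via the Parseval isometry furnished by the singular value decomposition, $\sset$ corresponds \emph{exactly} to the ellipsoid $\Theta_\abold(Q)$ with the choice $Q=1$ and $a_j:=1/\varphi(s_j^2)$, and in particular the minimax errors coincide, $e(\sset,\sigma)=e(\Theta_\abold,\sigma)$.

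Next I would substitute $Q=1$ and $1/a_{D+1}^2=\varphi^2(s_{D+1}^2)$ into the bound~(\ref{eq:minimax-abold}) of the main theorem. This yields
\begin{equation*}
e(\sset,\sigma)\le \inf_{D}\lr{\varphi^2(s_{D+1}^2)+\sigma^2\rho_D^2}^{1/2}\le 2.2\, e(\sset,\sigma).
\end{equation*}
Squaring both inequalities, and using $2.2^2=4.84$, gives the stated bound~(\ref{eq:minimax-sset}).

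The step I expect to be most delicate is the one I address first, namely ensuring that the correspondence between $\sset$ and the Sobolev-type ellipsoid is a genuine equality of sets (matching the suprema in the definition of the minimax risk) rather than a one-sided inclusion; a one-sided inclusion would give only the upper bound on $e(\sset,\sigma)$ but not the reverse inequality. Once the converse construction of $v$ from $\theta$ is in place, the corollary reduces to a direct rewriting of the theorem in the natural parameterization dictated by $\varphi$, with no further analytic work required.
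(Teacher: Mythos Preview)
Your proposal is correct and matches the paper's intended argument: the paper does not give a separate proof of the Corollary but treats it as an immediate consequence of the Theorem via the identification of~$\sset$ with the ellipsoid~$\Theta_{\abold}(Q)$ for~$a_j = Q/\varphi(s_j^2)$ and~$Q=1$, which is precisely what you carry out. You are in fact more careful than the paper's exposition, which only states the implication ``$x\in\sset \Rightarrow \theta\in\Theta_{\abold}(Q)$'' explicitly; your observation that the converse inclusion is needed for the right-hand inequality in~(\ref{eq:minimax-sset}), together with your construction of~$v$ from~$\theta$, fills this in cleanly.
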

This consequence allows to explicitly compute the minimax rates for
statistical inverse problems in Hilbert space for a variety of index
functions~$\varphi$ and decay rates of singular numbers~$s_{j},\
j\in\N$ in subsequent examples. 

\section{Proof of the theorem}
\label{sec:proof}

The main portion in this note is to provide the details for proving
the main result. The fundamental underlying principle is comprised in
three observations.
\begin{enumerate}
\item The error   of the truncated (at $D$-th summand) series estimator upper bounds the
  minimax error.
\item The intermediate $\inf$ in~(\ref{eq:minimax-abold}) is the error
  of the (best) truncated series estimator.
\item Up to a factor $2.22$ the (squared)  error   of the truncated series
  estimator is best possible.
\end{enumerate}
The first assertion is trivial.
For the second assertion let us introduce the truncated series
estimator, given observations~$\zs_{j},\ j\in\N$, as
\begin{equation}
  \label{eq:ts-estimator}
  \hat \theta_{n}(\zs) := \sum_{j=1}^{n} \zs_{j} v_{j}.
\end{equation}
\begin{remark}
  The above estimator corresponds to the spectral cut-off estimator
  for the original data~$\ys$. In these terms  we find that
$$
 \hat \theta_{n}(\zs) =  \sum_{j=1}^{n}\frac 1 {s_{j}} \ys_{j} v_{j}.
$$
\end{remark}
\begin{lemma}\label{lem:qt-estimator}
  We have that
$$
e^{2}(\hat \theta_{n},\Theta_{\abold},\sigma) =
\frac{Q^{2}}{a_{n+1}^{2}} + \sigma^{2} \rho_{n}^{2}.
$$
\end{lemma}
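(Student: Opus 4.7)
The plan is to decompose the squared $L^2$ risk of the truncated series estimator into its variance and bias parts, compute the variance exactly, and then maximize the bias over the ellipsoid $\Theta_{\abold}(Q)$.

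First I would write, for any fixed $\theta$,
\begin{equation*}
\norm{\theta-\hat\theta_{n}(\zs)}{X}^{2} = \sum_{j=1}^{n}(\theta_{j}-\zs_{j})^{2} + \sum_{j=n+1}^{\infty}\theta_{j}^{2}
= \sum_{j=1}^{n}\sigma^{2}\sigma_{j}^{2}\xi_{j}^{2} + \sum_{j=n+1}^{\infty}\theta_{j}^{2},
\end{equation*}
using the sequence model~(\ref{eq:seq-model}) and the definition~(\ref{eq:ts-estimator}). Taking expectations, the first (stochastic) term contributes exactly $\sigma^{2}\sum_{j=1}^{n} s_{j}^{-2} = \sigma^{2}\rho_{n}^{2}$ by~(\ref{eq:rhoD}), independently of $\theta$. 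Thus
\begin{equation*}
\E{\norm{\theta-\hat\theta_{n}}{X}^{2}} = \sigma^{2}\rho_{n}^{2} + \sum_{j=n+1}^{\infty}\theta_{j}^{2}.
\end{equation*}

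Next I would take the supremum of the bias term over $\theta\in\Theta_{\abold}(Q)$. Since the sequence $\abold$ is non-decreasing, for any $j\ge n+1$ we have $a_{n+1}^{2}\le a_{j}^{2}$, and therefore
\begin{equation*}
\sum_{j=n+1}^{\infty}\theta_{j}^{2} \le \frac{1}{a_{n+1}^{2}}\sum_{j=n+1}^{\infty}a_{j}^{2}\theta_{j}^{2} \le \frac{Q^{2}}{a_{n+1}^{2}},
\end{equation*}
the last inequality using the defining constraint~(\ref{eq:sobolev}). To see that this is tight, I would exhibit the extremizer $\theta^{\ast}$ with $\theta^{\ast}_{n+1} = Q/a_{n+1}$ and all other coordinates zero; it lies in $\Theta_{\abold}(Q)$ and attains $\sum_{j>n}(\theta^{\ast}_{j})^{2} = Q^{2}/a_{n+1}^{2}$.

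Combining the two pieces gives the claimed identity. The only mild technical point to watch is that the ellipsoid supremum is attained inside $\Theta_{\abold}(Q)$ (so the inequality is an equality), which is handled by the explicit extremizer; there is no real obstacle, as the decomposition into independent variance and bias terms under white noise does all the work.
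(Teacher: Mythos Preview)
Your proof is correct and follows essentially the same route as the paper: bias--variance decomposition of the truncated series estimator, exact computation of the variance as $\sigma^{2}\rho_{n}^{2}$, and maximization of the tail bias over the ellipsoid via monotonicity of $\abold$, with tightness witnessed by the single-coordinate extremizer at index $n+1$. There is nothing to add.
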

\begin{proof}
  We derive the bias-variance decomposition for the estimator as
  \begin{align*}
  \E{\norm{\theta - \hat \theta_{n}(\zs)}{X}^{2}} & =
                                                    \sum_{j=1}^{n}\E{\lr{\theta_{j}
                                                    - \zs_{j}}^{2}} +
                                                    \sum_{j=n+1}^{\infty}
                                                    \theta_{j}^{2}\\
&=  \sigma^{2}\sum_{j=1}^{n}\frac{1}{s_{j}^{2}} + \sum_{j=n+1}^{\infty}
                                                    \theta_{j}^{2}.
  \end{align*}
Thus, uniformly over the class~$\Theta_{\abold}$ we find that
$$
\sup_{\theta\in\Theta_{\abold}} \E{\norm{\theta - \hat
    \theta_{n}(\zs)}{X}^{2}}
=  \sigma^{2}\sum_{j=1}^{n}\frac{1}{s_{j}^{2}} +
\sup_{\theta\in\Theta_{\abold}}\sum_{j=n+1}^{\infty} \theta_{j}^{2}
$$
Now we observe that
$$
\sup_{\theta\in\Theta_{\abold}}\sum_{j=n+1}^{\infty} \theta_{j}^{2}
=
\sup_{\theta\in\Theta_{\abold}}\sum_{j=n+1}^{\infty}a_{j}^{2}\theta_{j}^{2}
a_{j}^{-2}\leq \frac{Q^{2}}{a_{n+1}^{2}},
$$
from the monotonicity assumptions for~$\abold$. But the upper bound is
attained for the element
$$
\theta^{0}_{j} :=
\begin{cases}
  \frac{Q}{a_{n+1}}&, j=n+1\\
0 &, \text{else.}
\end{cases}
$$
The proof of the Lemma is complete.
\end{proof}
\begin{remark}
  The estimator~$\hat\theta_{n}$, which selects the first $n$
  coordinates,  is the best among all truncation
  estimators which use $n$ coordinates. Suppose that a truncation
  estimator, say $\hat \theta_{P}$ uses a
  set~$P\subset\set{1,2,\dots},\ |P|=n$. Then its squared risk
  (uniform on~$\Theta_{\abold}$) is given as~$\sup_{\theta\in
    \Theta_{\abold}}\sum_{j\not \in P}\theta_{j}^{2} + \sigma^{2}\sum_{j\in P}\frac
  1 {s_{j}^{2}}$. Obviously, the variance term is mimimal for the
  initial segment. If~$k := \min P^{c} \leq n$, then similar to above,
  we let~$\theta^{0}$ have only the~$k$th component different from zero
  with~$\theta_{k}^{0}:= Q/a_{k}$. Then the uniform squared bias is upper bounded
  by its value at~$\theta^{0}$, and this gives~$Q^{2}/a_{k}^{2} \geq
  Q^{2}/a_{n+1}^{2}$, from monotonicity. Therefore, consideration may
  be restricted to the initial segment, only.
\end{remark}
We turn to proving the final assertion from above, and this is the
crucial part for the overall proof. To this end we use arguments
from the seminal study~\cite{MR1062717}. In Corollary (to Theorem~10)
ibid. these authors establish that for \emph{orthosymmetric, compact,
  convex and quadratically convex} sets~$\Theta$ the squared risk of the best
truncated series estimator is less than 4.44 times the squared minimax risk
over~$\Theta$. Ellipsoids as the set~$\Theta_{\abold}$ are
prototypical examples of such sets.

However, in that study the model was the sequence space model, similar
to~(\ref{eq:seq-model}),  with i.i.d
noise~$\xi_{j}$, but  with~$\sigma_{j}\equiv 1$. Thus their argument
does not cover statistical inverse problems (with compact operator~$T$
as in~(\ref{eq:base})).
Therefore, we (briefly) recall the arguments used in the study~\cite{MR1062717}.

The proof can be divided as follows.
Let~$\Theta(r) := \set{\theta,\ \abs{\theta_{i}}^{2}\leq
  r_{i}}\subset\Theta_{\abold}$ ($r_{i}\to 0$) be any hyperrectangle. 
\begin{enumerate}
\item Plainly we have that
  \begin{equation}
    \label{eq:plainly}
e(\Theta(r),\sigma) \leq e(\Theta_{\abold},\sigma) \leq e_{T}(\Theta_{\abold},\sigma).
  \end{equation}
\item The reasoning in~\cite{MR1062717} starts with considering
  1d-subproblems, and there we cannot distinguish between regression
  or inverse estimation problems. It is shown
  in~\cite[Thm.~2]{MR1062717} that for 1d-problems best nonlinear
  estimators cannot perform better that $\sqrt{2.22}$ times best truncation
  estimators, or the best linear estimator.
\item This extends to hyperrectangles, as shown in ~\S~3 ibid, because
  the minimax risk is a Bayes  risk, and the worst Bayes prior is of
  product type. Specifically, as shown in~\cite[Prop.~8]{MR1062717} we find that~$e_{T}(\Theta(r),\sigma) \leq \sqrt{2.22}
  e(\Theta(r),\sigma)$, thus, together with~(\ref{eq:plainly}) we already find
  \begin{equation}
    \label{eq:hyper-ineq}
  e_{T}(\Theta(r),\sigma) \leq \sqrt{2.22}   e_{T}(\Theta_{\abold},\sigma).
  \end{equation}
\item In  view of Lemma~\ref{lem:qt-estimator} the assertion of the
  theorem follows once we can prove the next result, similar to~\cite[Thm.~10]{D/L/M} (Notice
  that~$\sqrt{2.22} * \sqrt 2 \leq 2.2$).
\end{enumerate}

  \begin{lemma} For ellipsoids we have that
$$
e_{T}(\Theta_{\abold},\sigma) \leq \sqrt 2 \sup\set{e_{T}(\Theta(r),\sigma),\quad
  \Theta(r) \subset \Theta_{\abold})}.
$$
  \end{lemma}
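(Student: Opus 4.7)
The plan is to inscribe in $\Theta_{\abold}$ two explicit hyperrectangles and to show that at least one of them already realises, up to a factor of $\sqrt{2}$, the truncation risk on the whole ellipsoid. Write $v_j := \sigma^{2}/s_j^{2}$ and $V_n := \sigma^{2}\rho_n^{2}$; by Lemma~\ref{lem:qt-estimator} together with the Remark that follows it, one has $e_T^{2}(\Theta_{\abold},\sigma) = \min_n\bigl(V_n + Q^{2}/a_{n+1}^{2}\bigr) =: f(D^{*})$, the minimum being attained at the best truncation level $D^{*}$.

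Let $K^{*}$ denote the largest integer with $\sum_{j=1}^{K^{*}}a_j^{2}v_j \leq Q^{2}$, which is finite because compactness of $T$ forces $v_j\to\infty$. I will work with two candidate hyperrectangles inscribed in $\Theta_{\abold}$: the unscaled choice $r^{(1)}_j := v_j$ for $j\leq K^{*}$ and zero otherwise, and the scaled choice $r^{(2)}_j := c\,v_j$ for $j\leq K^{*}+1$ and zero otherwise, where $c := Q^{2}/\sum_{j=1}^{K^{*}+1}a_j^{2}v_j \in (0,1)$ by maximality of $K^{*}$; both obey $\sum_j a_j^{2}r_j \leq Q^{2}$. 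The first step will be to reprise Lemma~\ref{lem:qt-estimator} on each. For $r^{(1)}$ the bias plus variance telescopes to the constant $V_{K^{*}}$ on $0\leq n\leq K^{*}$, giving $e_T^{2}(\Theta(r^{(1)}),\sigma) = V_{K^{*}}$. For $r^{(2)}$ the analogous sum equals $c V_{K^{*}+1} + (1-c)V_n$, minimised at $n=0$; combined with the crude estimate $\sum_{j=1}^{K^{*}+1} a_j^{2}v_j \leq a_{K^{*}+1}^{2} V_{K^{*}+1}$ this yields $e_T^{2}(\Theta(r^{(2)}),\sigma) = c V_{K^{*}+1} \geq Q^{2}/a_{K^{*}+1}^{2}$.

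The second step extracts the factor $\sqrt{2}$ from the optimality of $D^{*}$. Since $f(K^{*}) = V_{K^{*}} + Q^{2}/a_{K^{*}+1}^{2} \geq f(D^{*})$, the elementary inequality $\max(x,y) \geq (x+y)/2$ produces
\begin{equation*}
\max\!\bigl(V_{K^{*}},\; Q^{2}/a_{K^{*}+1}^{2}\bigr) \;\geq\; \tfrac{1}{2}\bigl(V_{K^{*}} + Q^{2}/a_{K^{*}+1}^{2}\bigr) \;\geq\; \tfrac{1}{2} f(D^{*}).
\end{equation*}
Whichever of the two terms is larger tells me whether to keep $r^{(1)}$ or $r^{(2)}$; the selected rectangle then satisfies $e_T^{2}(\Theta(r),\sigma) \geq \tfrac{1}{2} e_T^{2}(\Theta_{\abold},\sigma)$, and taking square roots delivers the lemma.

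The calculation I expect to require most care is the identity $e_T^{2}(\Theta(r^{(2)}),\sigma) = c V_{K^{*}+1}$. Because $c<1$ the map $n \mapsto c V_{K^{*}+1} + (1-c)V_n$ is strictly increasing in $n$ (as $V_n$ is strictly increasing), so its minimum over $0\leq n\leq K^{*}+1$ is attained at the boundary $n=0$ rather than at some intermediate index; for $n>K^{*}+1$ the risk reduces to $V_n > V_{K^{*}+1} \geq c V_{K^{*}+1}$, so the overall minimum is indeed $c V_{K^{*}+1}$. Once this bookkeeping is in place, the two-construction dichotomy above closes the proof.
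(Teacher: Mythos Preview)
Your argument is correct, and it is genuinely different from the paper's. The paper follows the variational route of Donoho--Liu--MacGibbon: it introduces the concave functional $J(r)=\sum_i\min\{r_i,\sigma^2\sigma_i^2\}$, invokes compactness of $\Theta_+^2$ to obtain a maximiser $r^\ast$, computes the Gateaux derivative at $r^\ast$ to derive the key inequality $\sum_{i\notin P}r_i\le\sum_{i\notin P}r_i^\ast+\sigma^2\sum_{i\in Q}\sigma_i^2$, and then bounds the risk of the truncation estimator based on $P=\{i:r_i^\ast\ge\sigma^2\sigma_i^2\}$ by $2J(r^\ast)$. Your proof bypasses all of this by writing down two explicit inscribed rectangles indexed by the single integer $K^\ast$, evaluating their truncation risks directly, and using the dichotomy $\max(x,y)\ge\tfrac12(x+y)$ applied at the level $n=K^\ast$. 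The advantage of your route is that it is fully constructive and elementary---no compactness, no derivative, and the ``hardest'' rectangle is exhibited rather than merely shown to exist. The advantage of the paper's route is that the functional $J$ and the Gateaux argument transplant verbatim to any orthosymmetric quadratically convex body, whereas your construction leans on the ellipsoid structure (through the formula $e_T^2(\Theta_{\abold},\sigma)=\min_n(V_n+Q^2/a_{n+1}^2)$ and the monotonicity bound $\sum_{j\le K^\ast+1}a_j^2v_j\le a_{K^\ast+1}^2V_{K^\ast+1}$).

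One small point worth tightening: when you ``reprise Lemma~\ref{lem:qt-estimator}'' on $\Theta(r^{(1)})$ and $\Theta(r^{(2)})$ you minimise only over initial-segment truncations, whereas in this paper $e_T$ ranges over all coordinate-selection estimators $\hat\theta_P$. For your two rectangles this makes no difference---since $r_j^{(2)}=c\,v_j<v_j$ (resp.\ $r_j^{(1)}=v_j$) the optimal $P$ is empty (resp.\ any subset of $\{1,\dots,K^\ast\}$)---but it is worth one sentence to say so, lest the reader worry that your displayed values are only upper bounds for $e_T^2(\Theta(r^{(i)}),\sigma)$.
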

  \begin{proof}
Arguing as in~\cite{MR1062717} for ellipsoids the error is controlled
on the positive orthant~$\Theta_{+}$.
    We consider the following functional, which expresses the
      minimax risk of a truncated series estimator on the
      hyperrectangle~$\Theta(r)$.
    \begin{equation}
      \label{eq:J}
      J(r) := \sum_{i=1}^{\infty}
      \min\set{r_{i},\sigma^{2}\sigma_{i}^{2}},\quad r\in \Theta_{+}^{2}.
    \end{equation}

This functional is concave (as a sum of concave functions), and it is
continuous, such that on the compact set~$\Theta_{+}^{2}$ it attains it's
maximum value, say at~$r^{\ast}$. This maximality property yields
that~$J(r) \leq J(r^{\ast}),\ r\in \Theta_{+}^{2}$. We introduce the
sets~$Q:= \set{i, r^{\ast}_{i} =
  \sigma^{2}\sigma_{i}^{2}}$, and the~$P:= \set{i, r^{\ast}_{i}\geq
  \sigma^{2}\sigma_{i}^{2}}$. 
The latter will be finite,
since~$r^{\ast}_{i}\to 0$ and~$\sigma^{2}\sigma_{i}^{2}\to \infty$.
With this notation we can derive, similarly as in the original
study~\cite{D/L/M},  the explicit form of the Gateaux
derivative of the functional~$J$ at~$r^{\ast}$ as
\begin{equation}
  \label{eq:gateaux}
  D_{r^{\ast}}J(h) = \sum_{i\not\in P} h_{i} - \sum_{i\in Q}
  (h_{i})_{-},\quad  h := r - r^{\ast}\in \Theta_{+}^{2}.
\end{equation}
Observing that for~$i\in Q$ and~$h_{i}<0$ we have
\begin{equation}
  \label{eq:h-}
  (h_{i})_{-} = - h_{i} = r^{\ast}_{i} - r_{i} \leq \sigma^{2}\sigma_{i}^{2},
\end{equation}
and using that~$D_{r^{\ast}}J(h) \leq 0$ (maximality property) we
arrive at
\begin{equation}
  \label{eq:ri-ineq}
\sum_{i\not \in P}r_i<\sum_{i\not\in P}r^{\ast}_{i} + \sigma^2\sum_{i\in
  Q}\sigma_i^2,\quad r\in \Theta_{+}^{2}.
\end{equation}
Consider the truncation estimator $\hat{\theta}^{*}$ for
$\Theta_{\abold}$ given by $\hat{\theta}^{*} =\zs_i\chi_{(i\in
  P)}$. It has the following (squared) risk
\begin{align*}
\E{\norm{\hat{\theta}^{*} - \theta}{}^{2}} &=\sum_{i\not\in
  P}\theta_i^2+ \sigma^2\sum_{i\in P}\sigma_i^2.
\end{align*}
Since by definition~$Q \subset P$, and with~$r_{i}:= \theta_{i}^{2}$
this can be upper bounded by
$$
\E{\norm{\hat{\theta}^{*} - \theta}{}^{2}}
\le\sum_{i\not \in
  P}r^{\ast}_i +2\sigma^2\sum_{i\in P}\sigma_i^2\le2 J(r^{\ast}).
$$
Therefore,~$e_{T}^{2}(\Theta_{\abold},\sigma) \leq \E{\norm{\hat{\theta}^{*} -
    \theta}{}^{2}} \leq 2 J(r^{\ast})$.
Since, by construction of~$r^{\ast}$ we have that~$J(r^{\ast}) =  \sup\set{e_{T}(\Theta(r),\sigma),\quad
  \Theta(r) \subset \Theta_{\abold})}$ the proof of the lemma is complete.
  \end{proof}

\section{Discussion and examples}
\label{sec:examples}

Minimax rates for statistical inverse problems were established in
many studies. The seminal study is~\cite{MR1409127}. An update of the
subsequent studies was given in the survey~\cite{MR2421941}, which will be used
for comparison below. Two studies are related to questions in the
present study.

The study~\cite{MR2361904} provides a detailed analysis of statistical
inverse problems (in a slightly more general framework). Theorem~8
ibid. actually asks for the relation between the minimax error and best
error bounds for spectral cut-off. Assumptions are given where the
error of a certain specific regularized estimator is not worse than the error
obtained by spectral cut-off. In our approach we do not rely on any
specific way of obtaining estimators of the unknown element. 

The authors in~\cite{loubes2009} discuss, among
many other things, the relation between ellipsoids and source
sets as means of regularity conditions. Theorem~4.1 ibid. asserts
the minimax rate for power type decay of singular numbers of the operator~$T$ and power
type increase of the sequence~$\abold$ in the
ellipsoid~(\ref{eq:sobolev}). One specific focus is on the concept of
maxisets, which is an interesting approach, but which is beyond the
focus of the present study.

Previous minimax rates on general
source sets were given in~\cite{MR2240642}, but restricted to
\emph{operator monotone} index functions, only. These index functions
are limited to low smoothness, and the result was obtained by an
application of Pinsker's study~\cite{MR82j:93048}.

Next, we relate the results to similar results as known in 'classical'
inverse problems and to results from non-parametric statistical
testing. 
 For classical inverse problems, when the noise
 obeys~$\norm{\xi}{Y}\leq 1$, then the minimax rate of recovery is 
 related to the \emph{modulus of continuity}. We skip details and
 refer to  the study~\cite{MR1984890}. As can be seen from those
 results, the corresponding minimax rate is (up to the constant~$\sqrt
 2$) given by $\inf_{D}\set{\frac{Q^{2}}{a_{D+1}^{2}} + \sigma^{2}
  \frac{1}{s_{D}^{2}}}^{1/2}$, which, again is attained by a
truncated series estimator. Looking the the second summand above we
immediately conclude that for exponentially decaying singular numbers
the rates for 'classical' and statistical inverse problems are the same.

 In non-parametric statistical testing such
  estimators also play a similar role, and (the square of) the  \emph{minimum
    separation radius} was given in~\cite[Prop.~3]{MR2879673} as
$\inf_{D}\max\set{\frac{Q^{2}}{a_{D+1}^{2}}, \sigma^{2}
  \lr{\sum_{j=1}^{D} \frac{1}{s_{j}^{4}}}^{1/2}}$. Additional
constants, determined by the prescribed errors of the first and second
kind appear. Still, the truncated series estimator plays a prominent
role. The major differences can be seen from the significance of the
fourth powers instead of the second ones, such that 'estimation is
harder than testing'. Again, for exponentially decaying singular
numbers the rates for testing and estimation coincide.

\medskip

The following examples highlight the minimax rates in statistical
inverse problems in prototypical situations. The corresponding rates are
also seen in~\cite[Tbl.~1]{MR2421941}. 
\begin{assumption}[ill-posedness of the operator]
We call the operator~$T$ \emph{modeately ill-posed} if its singular
numbers decay at a power type rate~$s_{j}\asymp  j^{-p},\
j=1,2,\dots$ for some $p>0$. It is called 
\emph{severely ill-posed} if~$s_{j}\asymp e^{-pj},\ j=1,2,\dots$  
\end{assumption}
\begin{assumption}
  [solution smoothness]
The solution smoothness, expressed in terms of~$\Theta_{\abold}$ is
said to be~\emph{moderate} if $a_{j}\asymp j^{\kappa}$. It is called
\emph{analytic} in case~$a_{j}\asymp e^{\kappa j}$.
\end{assumption}
Finally, we introduce the corresponding notion for the statistical
problem at hand.
\begin{assumption}[ill-posedness of the statistical problem]
  The statistical problem~(\ref{eq:base}) is called \emph{moderately
    ill-posed} if the minimax rate of reconstruction ( in terms of the
  noise level~$\sigma$) is of power
  type. It is called \emph{severely ill-posed} if the rate is
  \emph{logarithmic}, and it is called \emph{mildly ill-posed} if (up
  to a logarithmic factor) the minimax rate is linear in the noise level~$\sigma$.
\end{assumption}


The results as obtained from the application of the theorem are shown
in  Table~\ref{tab:rates-cavalier}.  
The number~$D_{\ast}$ denotes the optimal
  truncation level, the number which balances both terms in the middle
  sum in~(\ref{eq:minimax-abold}). By 'rate' we denote the
  corresponding minimax rate as~$\sigma \to 0$.

\begin{table}[ht]
  \centering
  \begin{tabular}{l| rl | rl }
\hline
 & $s_{j} \asymp j^{-p}$ 
& $\rho_{D}^{2} \asymp D^{2p+1}$ 
&  $s_{j}\asymp e^{-pj}$ 
&  $\rho_{D}^{2} \asymp e^{2p D}$\\\hline
 & $\varphi(t)=$ & $t^{\kappa/(2p)}$  
& $\varphi(t)=$  
& $ \log^{-\kappa}(1/t)$\\
$a_{j}\asymp j^{\kappa}$    
& $D_{\ast} = $ 
&  $\lr{\frac{1}{\sigma}}^{1/(\kappa + p +1/2)}$ 
& $D_{\ast}= $ 
&$\log(1/\sigma)$\\
& rate:
& $\sigma^{\kappa/(\kappa + p + 1/2)}$
& rate:
& $\log^{-\kappa}(1/\sigma)$ \\\hline
&$\varphi(t)=$
&  $e^{- \kappa t^{-1/(2p)}}$
& $\varphi(t)=$
& $t^{\kappa/(2p)}$\\
$a_{j}\asymp e^{\kappa j}$
& $D_{\ast} =$
&$\frac{1}{2\kappa} \log(1/\sigma)$
& $D_{\ast} = $
& $ \frac 1 {p + \kappa} \log(1/\sigma) $ \\
& rate:
&  $ \sigma \lb{\log(1/\sigma)}^{{p+1/2}}$ 
& rate:
& $\sigma^{\kappa/(p + \kappa)}$\\\hline
  \end{tabular}
  \vspace*{.5\baselineskip}
  \caption{\small Outline of reconstruction rates for Sobolev-type smoothness of
    the truth~$\theta\in\Theta_{\abold}$ and power/exponential type decay of the
    singular numbers.}
  \label{tab:rates-cavalier}
\end{table}
As we see, we find mild ill-posedness of the problem for moderately
ill-posed operator and analytic smoothness. The problem is severely
ill-posed for severely ill-posed operator and moderate smoothness. We
emphasis that the problem is moderately ill-posed, both for moderate
ill-posedness of the operator and power type smoothness, but also for
analytic smoothness and severely ill-posed operator. That is why the
distinction between ill-posedness of the operator and ill-posedness of
the statistical problem is recommended.

\bibliographystyle{plain}
\bibliography{minimax}

\begin{thebibliography}{10}

\bibitem{MR2361904}
N.~Bissantz, T.~Hohage, A.~Munk, and F.~Ruymgaart.
\newblock Convergence rates of general regularization methods for statistical
  inverse problems and applications.
\newblock {\em SIAM J. Numer. Anal.}, 45(6):2610--2636, 2007.

\bibitem{MR2421941}
L.~Cavalier.
\newblock Nonparametric statistical inverse problems.
\newblock {\em Inverse Problems}, 24(3):034004, 19, 2008.

\bibitem{D/L/M}
David~L. Donoho, Richard~C. Liu, and Brenda MacGibbon.
\newblock Minimax risk for hyperrectangles.
\newblock Technical Report 123, Dept. Statistics, University of California,
  Berkeley, 1988.

\bibitem{MR1062717}
David~L. Donoho, Richard~C. Liu, and Brenda MacGibbon.
\newblock Minimax risk over hyperrectangles, and implications.
\newblock {\em Ann. Statist.}, 18(3):1416--1437, 1990.

\bibitem{MR2879673}
B\'eatrice Laurent, Jean-Michel Loubes, and Cl\'ement Marteau.
\newblock Non asymptotic minimax rates of testing in signal detection with
  heterogeneous variances.
\newblock {\em Electron. J. Stat.}, 6:91--122, 2012.

\bibitem{loubes2009}
Jean-Michel Loubes and Vincent Rivoirard.
\newblock Review of rates of convergence and regularity conditions for inverse
  problems.
\newblock {\em International Journal of Tomography and Statistics}, 11:61--82,
  06 2009.

\bibitem{MR1409127}
Bernard~A. Mair and Frits~H. Ruymgaart.
\newblock Statistical inverse estimation in {H}ilbert scales.
\newblock {\em SIAM J. Appl. Math.}, 56(5):1424--1444, 1996.

\bibitem{MR1984890}
Peter Math\'e and Sergei~V. Pereverzev.
\newblock Geometry of linear ill-posed problems in variable {H}ilbert scales.
\newblock {\em Inverse Problems}, 19(3):789--803, 2003.

\bibitem{MR2240642}
Peter Math\'e and Sergei~V. Pereverzev.
\newblock Regularization of some linear ill-posed problems with discretized
  random noisy data.
\newblock {\em Math. Comp.}, 75(256):1913--1929, 2006.

\bibitem{MR82j:93048}
M.~S. Pinsker.
\newblock Optimal filtration of square-integrable signals in {G}aussian noise.
\newblock {\em Problems Inform. Transmission}, 16(2):52--68, 1980.

\end{thebibliography}




\end{document}